\newtheorem{thm}{Theorem}
\newtheorem{crl}{Corollary}
\title{On Groups $G_{n}^{2}$ and Coxeter Groups}
\author{Vassily Olegovich Manturov}
\begin{document}

\maketitle

\begin{abstract}
In the present paper, we prove that the group $G_{n}^{2}$ of free $n$-strand braids is isomorphic to a subgroup of a semidirect product of some Coxeter group that we denote by $C(n,2)$ and the symmetric group $S_{n}$.

\end{abstract}

In \cite{Great}, groups $G_{n}^{k}$ closely related to braid groups and various problems in geometry, topology, and dynamical systems were constructed. Many invariants of topological objects are valued in groups $G_{n}^{k}$, hence it is extremely important to solve the word problem and the conjugacy problem for $G_{n}^{k}$.

The first non-trivial instance of the groups $G_{n}^{k}$ is the group $G_{n}^{2}$; deeper understanding of $G_{n}^{2}$ may allow one to shed light to $G_{n}^{k}$ for larger $k$ since there are various homomorphisms $G_{n}^{k}\to G_{n-1}^{k}$ and
$G_{n}^{k}\to G_{n-1}^{k-1}$ finally leading to various homomorphisms $G_{n}^{k}\to G_{m}^{2}$ for various $m<n$ \cite{Great,MN}.

The aim of the present note is to prove that some finite index subgroup of the group $G_{n}^{2}$ is isomorphic to a finite index subgroup of some Coxeter groups of a graph on $\left(\begin{array}{c} n \cr \\ 2 \end{array}\right)$ vertices. 

This leads to an algebraic solution of the word problem for the groups $G_{n}^{2}$. The final result can be formulated in a form of Theorem \ref{Vinberg} that $G_{n}^{2}$ embeds into a semidirect product of some Coxeter group $C(n,2)$ and the permutation group, nevertheless, we first describe it in the term of the ``rewriting procedure'' which allows one to represent words from Coxeter groups of a special type in terms of $G_{n}^{2}$, the latter having intrinsic geometrical meaning \cite{Great,Bracket,IMN,MW}.

The question whether the groups $G_{n}^{k}$ for $k>2$ admit a similar description in terms of Coxeter groups, still remains open.

Let $\Gamma$ be the graph on $\left(\begin{array}{c} n \cr \\ 2 \end{array}\right)$ vertices whose vertices are denoted by letters $b_{ij}$, where $i,j$ run all unordered pairs of unequal integers from $\{1,\dots, n\}$. We shall connect two vertices by an edge if they share an index, thus, $b_{ij}$ is connected to $b_{jk}$ for each three distinct letters $i,j,k$.
Let us mark such edges with $3$, thus, the corresponding Coxeter group $C(n,2)$ will be given by presentation

$$\langle b_{ij}| (1),(2),(3)\rangle, $$
where the three groups of relations are
$$(b_{ij}b_{ik})^{3}=1, \forall i,j,k\in \{1,\cdots,n\}, Card(\{i,j,k\})=3, \eqno{(1)}$$
$$b_{ij}b_{kl}=b_{kl}b_{ij}, \forall i,j,k,l\in \in\{1,\cdots,n\}: Card(\{i,j,k,l\})=4 \eqno{(2)}$$
$$b_{ij}^{2}=1, \forall i,j\in \{1,\cdots, n\}, i\neq j.\eqno{(3)}$$

The groups $G_{n}^{2}$ are known as {\em free braid groups} \cite{Great};
they are also known under other names ({\em e.g., virtual Gauss braids}), see, e.g.\cite{Bardakov,BBD}.

For an integer $n>2$, we define the group $G_{n}^{2}$ as the group
having the following $\left(\begin{array}{c} n \cr \\ 2 \end{array}\right)$ generators
$a_{m}$, where $m$ runs the set of all unordered $k$-tuples
$m_{1},\dots, m_{k},$ whereas each $m_{i}$ are pairwise distinct
numbers from $\left\{1,\dots, n\right\}$.

$$G_{n}^{k}=\langle a_{m}|(1'),(2'),(3')\rangle.$$

Here the defining relations look as follows:
$$a_{ij}a_{ik}a_{jk}=a_{jk}a_{ik}a_{ij}  \eqno{(1')}$$
for each three distinct indices $i,j,k\in \{1,\cdots, n\}$.

$$a_{m}a_{m'}=a_{m'}a_{m},\eqno{(2')}$$
where $m$ and $m'$ are two disjoint $2$-element subsets of $\{1,\cdots, n\}$.

Finally, for any distinct $i,j\in \{1,\cdots, n\}$ we set
$$a_{ij}^{2}=1. \eqno{(3')}$$

Let $l:G_{n}^{2}\to S_{n}$ be the homomorphism from $G_{n}^{2}$ to the symmetric group
taking each $a_{ij}$ to the transposition $(i,j)$. 

Note that there is a similar homomorphism $m:C(n,2)\to S(n)$ which just takes $b_{ij}$ to the transposition $(i,j)$. As we shall see later, these homomorphisms are closely related to each other.

We say that an element $\beta$ of $G_{n}^{2}$ is {\em pure} if $l(\beta)=1$. Hence, we get a normal subgroup $PG_{n}^{2}=Ker(l)$ of the group $G_{n}^{2}$.
This normal subgroup is similar to the subgroup of pure braids in the group of all braids.

Let $w=a_{i_{1,1},i_{1,2}},\cdots, a_{i_{k,1},i_{k,2}}$ be a word representing an element from $G_{n}^{2}$. For $j=1,\cdots, k$ by $w_{j}$ we denote the product of the first $j$ letters of $w$, $w_{0}$ being the empty word. For $p=1,\cdots, k$, we define the permutation
$\sigma_{p}= l(w_{p})^{-1}$ with  $\sigma_{0}=id$. In other words, $\sigma_{0}$ is the identical permutation, and each consequent permutation  $\sigma_{j+1}$ is obtained by multiplying $\sigma_{j}$ with the transposition corresponding to $w_{j}$ on the left.

Now we set 

${\tilde w}=b_{\sigma_{0}(i_{1,1}),\sigma_{0}(i_{1,2})}b_{\sigma_{1}(i_{2,1}),\sigma_{1}(i_{2,2})}\cdots b_{\sigma_{k-1}(i_{k,1}),\sigma_{k-1}(i_{k,2})}$.

This rewriting rule is related to the following two approaches of strand enumeration {\em the local one} and {\em the global one}.

Consider the permutation group $S_{3}=G_{3}^{2}$.
View Fig. \ref{redraw}.

\begin{figure}
\centering\includegraphics[width=200pt]{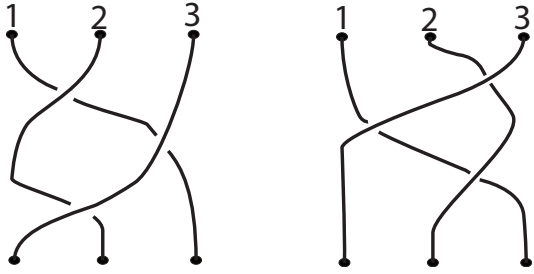}
\caption{Local enumeration yields $\sigma_{1}\sigma_{2}\sigma_{2}=\sigma_{2}\sigma_{1}\sigma_{2}$; global enumeration yields $a_{12}a_{13}a_{23}=a_{23}a_{13}a_{12}$}
\label{redraw}
\end{figure}

If we denote the crossings in a standard way (use Artin's notation), we get the standard Artin's relation $\sigma_{1}\sigma_{2}\sigma_{1}=\sigma_{2}\sigma_{1}\sigma_{2}$; if we say that $i$-th crossing $\sigma_{i}$ is formed by the intersection of $i$-th and $(i+1)$-th strands, we can even rewrite it as $b_{12}b_{23}b_{12}=b_{23}b_{12}b_{23}$ taking $b_{i,i+1}$ for $\sigma_{i}$.

However, if we enumerate the strands according to their upper ends and use $a_{ij}$ for the crossing with strands $i$ and $j$, we get the standard relation $a_{12}a_{13}a_{23}=a_{23}a_{13}a_{12}$ for $G_{n}^{2}$.

{\bf Example.} Consider the case $n=3, w=a_{12}a_{13}a_{23}a_{13}a_{23}$. Then we have the following permuations:
$\sigma_{0}=id, \sigma_{1}=(12),\sigma_{2}=(132),\sigma_{3}=id, \sigma_{4}=(13),\sigma_{5}=(213)$.
Thus, we get the corresponding word ${\tilde w}=b_{12}b_{23}b_{12}b_{13}b_{12}$, see Fig. \ref{examp}.

\begin{figure}
\centering\includegraphics[width=170pt]{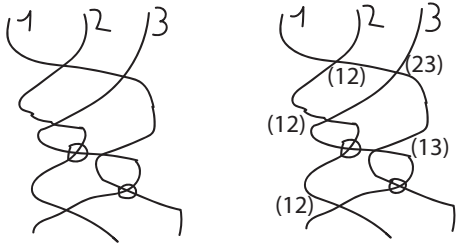}
\caption{An example of rewriting}
\label{examp}
\end{figure}

\begin{thm}
If two words $w,w'$ generate two equal elements of $G_{n}^{2}$ then the words ${\tilde w},{\tilde w}'$ generated equal elements of $C(n,2)$.

Moreover, $l(w)=m({\tilde w})^{-1}$.
 \label{thpr}
\end{thm}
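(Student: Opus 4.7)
My plan is to verify well-definedness by showing that the rewriting $w\mapsto\tilde w$ respects each of the three defining relations $(1')$, $(2')$, $(3')$ of $G_n^2$, and then to obtain the moreover clause by induction on $|w|$. The key structural observation is that the $(p{+}1)$-st letter of $\tilde w$ is $b_{\sigma_p(i),\sigma_p(j)}$ whenever the $(p{+}1)$-st letter of $w$ is $a_{ij}$, so it depends only on the running permutation $\sigma_p$ and on the letter at position $p{+}1$ of $w$. Consequently, if a defining relation $u=v$ is applied at positions $p{+}1,\dots,p{+}r$ inside $w$, the prefix of $\tilde w$ is untouched, and the suffix is untouched as soon as $l(u)=l(v)$, so that the running permutation $\sigma_{p+r}$ after $u$ coincides with the corresponding one after $v$. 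A direct check in $S_n$ shows $l(u)=l(v)$ in each of the three cases. Thus the content of well-definedness is to check that the $b$-block produced from $u$ equals the $b$-block produced from $v$ in $C(n,2)$.

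I would handle the three cases in turn, denoting the running permutation at the start of the block by $\sigma$ and using the recursion $\sigma_{q+1}=(ij)\sigma_q$ for a letter $a_{ij}$ at position $q{+}1$. For $(3')$, the block $a_{ij}a_{ij}$ produces $b_{\sigma(i),\sigma(j)}\,b_{\sigma(j),\sigma(i)}=b_{\sigma(i),\sigma(j)}^{2}$, which is trivial in $C(n,2)$ by relation $(3)$. For $(2')$, with $\{i,j\}\cap\{k,l\}=\emptyset$, the block $a_{ij}a_{kl}$ produces $b_{\sigma(i),\sigma(j)}\,b_{\sigma(k),\sigma(l)}$ (the second letter uses that $(ij)$ fixes $k$ and $l$), while the swapped block gives the reversed product; these are equal in $C(n,2)$ by relation $(2)$ since $\sigma$ sends disjoint $2$-sets to disjoint $2$-sets. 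For $(1')$, setting $a=\sigma(i)$, $b=\sigma(j)$, $c=\sigma(k)$, an explicit tracking of three running permutations yields $\widetilde{a_{ij}a_{ik}a_{jk}}=b_{ab}b_{bc}b_{ab}$ and $\widetilde{a_{jk}a_{ik}a_{ij}}=b_{bc}b_{ab}b_{bc}$, and these are equal in $C(n,2)$ by the braid relation $(b_{ab}b_{bc})^{3}=1$ of $(1)$ together with $b_{ab}^{2}=b_{bc}^{2}=1$.

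For the moreover clause I proceed by induction on $|w|$. Writing $w=w'\,a_{ij}$ with $|w'|=p$, we have $\tilde w=\tilde{w'}\cdot b_{\sigma_p(i),\sigma_p(j)}$, so $m(\tilde w)=m(\tilde{w'})\cdot(\sigma_p(i),\sigma_p(j))$. Using the conjugation identity $(\sigma_p(i),\sigma_p(j))=\sigma_p^{-1}(ij)\sigma_p$ and the inductive hypothesis $m(\tilde{w'})=l(w')^{-1}=\sigma_p$, this collapses to $m(\tilde w)=\sigma_p\cdot\sigma_p^{-1}(ij)\sigma_p=(ij)\sigma_p=\sigma_{p+1}=l(w)^{-1}$. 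The only genuinely delicate step in the whole argument is the bookkeeping in case $(1')$, where three distinct running permutations must be tracked and two six-index expressions must be shown to collapse into the three-letter $b$-blocks above; the other two cases and the induction are routine, with the moreover clause essentially falling out of well-definedness via the conjugation identity.
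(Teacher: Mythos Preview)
Your proof is correct and follows essentially the same approach as the paper: checking the three defining relations $(1')$, $(2')$, $(3')$ case by case and obtaining the same $b$-blocks, then proving the moreover clause by induction using a conjugation identity (the paper writes this as $(ab)^{-1}=a^{-1}(ab^{-1}a^{-1})$, which is your $(\sigma_p(i),\sigma_p(j))=\sigma_p^{-1}(ij)\sigma_p$ in disguise). Your write-up is in fact somewhat more explicit than the paper's, particularly in noting that $l(u)=l(v)$ for each relation ensures the suffix of $\tilde w$ is unaffected, and in spelling out the induction step for $l(w)=m(\tilde w)^{-1}$.
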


\begin{proof}
It suffices for us to check the following three cases: when $w'$ is obtained from $w$ by an addition/removal of a pair of equal subsequent letters, when $w'$ is obtained from $w$ by applying the far commutativity relation  $a_{ij}a_{kl}=a_{kl}a_{ij}$ (for all $i,j,k,l$ distinct) and when $w'$ is obtained from $w$ by obtaining (1').

In the first case, at some step $p$ we have some two letters  $a_{ij}a_{ij}$ and a permutation $\sigma_{p}$; thus, instead of the letter $a_{ij}$ we get the letter $b_{\sigma_{p}(i)\sigma_{p}(j)}$. However, $\sigma_{p+1}(i)=\sigma_{p}(j),\sigma_{p+1}(j)=\sigma_{p}(i)$. Besides that, $\sigma_{p+2}\equiv\sigma_{p}$. Consequently, the addition/removal of two subsequent equal letters $w\to w'$ yields an addition/removal of two subsequent equal letters  ${\tilde w}\to {\tilde w'}$.

If in $w$ at $p$-th step we have two commuting letters $a_{ij}a_{kl}$ then the corresponding letters in  $w'$ will commute as well: the value of the permutation  $\sigma$ on $k,l$ does not depend on the value of this permutation on $i,j$ and does not change when the corresponding transpostion is applied.

Let us consider the most interesting case. Assume the letters in $w$ in the positions $p+1,p+2,p+3$ are $a_{ij}a_{ik}a_{jk}$. Then we have: $\sigma_{p+1}(j)=\sigma_{p}(i),\sigma_{p+1}(i)=\sigma_{p}(j)$. Hence, $\sigma_{p+2}(i)=\sigma_{p+1}(k)=\sigma_{p}(k);\sigma_{p+2}(k)=\sigma_{p+1}(i)=\sigma_{p}(j),\sigma_{p+2}(j)=\sigma_{p+1}(j)=\sigma_{p}(i)$.
Finally, $\sigma_{p+3}(i)=\sigma_{p+2}(i)=\sigma_{p}(k);\sigma_{p+3}(j)=\sigma_{p+2}(k)=\sigma_{p}(j),\sigma_{p+3}(k)=\sigma_{p+2}(j)=\sigma_{p}(i)$.

Thus, the corresponding three letters in the word ${\tilde w}$ will look like
$b_{\sigma_{p}(i)\sigma_{p}(j)}b_{\sigma_{p}(j)\sigma_{p}(k)}b_{\sigma_{p}(i)\sigma_{p}(j)}$.

In the same way, we check that in the word $w'$ the  images of the letters $a_{jk}a_{ik}a_{ij}$ will look like
$b_{\sigma_{p}(j)\sigma_{p}(k)}b_{\sigma_{p}(i),\sigma_{p}(j)}b_{\sigma_{p}(j)\sigma_{p}(k)}$ and the permutation after these three letters will be the same:

$\sigma_{p+3}(i)=\sigma_{p}(k);\sigma_{p+3}(j)=\sigma_{p}(j),\sigma_{p+3}(k)=\sigma_{p}(i)$.

Since the relation
$
b_{\sigma_{p}(i)\sigma_{p}(j)}b_{\sigma_{p}(j)\sigma_{p}(k)}b_{\sigma_{p}(i)\sigma_{p}(j)}=
b_{\sigma_{p}(j)\sigma_{p}(k)}b_{\sigma_{p}(i),\sigma_{p}(j)}b_{\sigma_{p}(j)\sigma_{p}(k)}
$
holds in $C(n,2)$, we get the desired statement.

The last statement of the theorem can be reformulated as follows: the product of transpositions corresponding to the word $w$ read from the right to the left corresponds to the product of transpositions corresponding to ${\tilde w}$ read from the left to the right.

This statement can be proved by induction: we start with the transposition $(i_{1,1},i_{j,1})$ for both $w$ and ${\tilde w}$
and then note that $(ab)^{-1}= a^{-1} (a b^{-1} a^{-1})$. 

\end{proof}

Denote the map $PG_{n}^{2}\to C(n,2): w\to {\tilde w}$, we have constructed, by  $co$.

It is well known that for Coxeter groups there exist a gradient descent algorithm which can means the following. Given a Coxeter group $W$ with a standard system of generators $S=\{s_{i}\}$ and relations $s_{i}^{2}=1$, $(s_{i}s_{j})^{m_{ij}}=1$.
Rewrite the latter relations as $s_{i}s_{j}\cdots = s_{j}s_{i}\cdots $ ($m_{ij}$ factors on the LHS and $m_{ij}$ factors on the RHS). This exchange relation is an  {\em elementary equivalence} which does not change the length. Besides that we have the two the elementary equivalence  $s_{j}s_{j}=1$ which decreases (increases) the length by $2$.

We say that a word $w$ in generators $s_{i}$ is {\em reduced} if its length is minimal among all words representing the same element of $W$. It is known that from each word $w$ one can get to any reduced word $w'$ representing the same element of $W$ by using only elementary equivalences not increasing the length. In particular, any for any two reduced words $w,w'$ representing
the same element of $W$, we can get from $w$ to $w'$ by a sequence of exchanges.

From
Theorem \ref{thpr} immediately see that the same is true for $G_{n}^{2}$, namely, we get the following
\begin{crl}
For each word $w$ in $a_{ij}$ and a reduced word $w'$ equivalent to it in $G_{n}^{2}$ one can get
from $w$ to $w'$ by a sequence of far commutativity relations (2'), exchanges of the type $a_{ij}a_{ik}a_{jk}\to a_{jk}a_{ik}a_{ij}$ (1'), and cancellations of two identical letters $a_{ij}a_{ij}\to \emptyset$.

In particular, if both $w$ and $w'$ are reduced then we can get from $w$ to $w'$ by using only (1') and (2').
\end{crl}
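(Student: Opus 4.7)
The plan is to transfer the problem from $G_{n}^{2}$ to $C(n,2)$ via the rewriting $w\mapsto \tilde w$, invoke the gradient-descent algorithm for Coxeter groups, and pull the resulting sequence of elementary equivalences back to $G_{n}^{2}$. The one point that Theorem \ref{thpr} does not by itself supply is the \emph{reverse} correspondence: moves on $\tilde w$ in $C(n,2)$ must lift to moves on $w$ in $G_{n}^{2}$.

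First I would observe that, once the convention $\sigma_{0}=\mathrm{id}$ is fixed, the rewriting is a length-preserving bijection between words: given any sequence of $b$-letters one recovers the $a$-letters and the permutations $\sigma_{s}$ inductively by reading $\{i_{s+1,1},i_{s+1,2}\}=\sigma_{s}^{-1}(\{p,q\})$, where $b_{pq}$ is the $(s+1)$-th letter of $\tilde w$. I would then check that under this bijection each of the three Coxeter moves on $\tilde w$---a cancellation $b_{pq}b_{pq}\to\emptyset$, a far commutativity $b_{pq}b_{rs}\leftrightarrow b_{rs}b_{pq}$ for disjoint index pairs, and a braid exchange $b_{pq}b_{qr}b_{pq}\leftrightarrow b_{qr}b_{pq}b_{qr}$---is forced to come from the analogous move on $w$ (respectively a cancellation $a_{ij}a_{ij}\to\emptyset$, a relation (2'), or a relation (1')). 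This is a short case analysis exploiting the recursion $\sigma_{p+1}(x)=\sigma_{p}((i,j)(x))$; for cancellation, for example, one examines the three possibilities $|\{i_{2},j_{2}\}\cap\{i_{1},j_{1}\}|\in\{0,1,2\}$ and sees that only the last is consistent with the two $b$-letters coinciding.

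With this lifting property in hand, I would deduce that if $w'$ is reduced in $G_{n}^{2}$ then $\tilde w'$ is reduced in $C(n,2)$, since any Coxeter-shortening of $\tilde w'$ would lift to a $G_{n}^{2}$-shortening of $w'$. The gradient-descent algorithm in $C(n,2)$ then supplies a sequence of length-non-increasing elementary equivalences from $\tilde w$ to $\tilde w'$; lifting this sequence term by term produces a sequence of (2'), (1'), and cancellation moves from $w$ to $w'$, proving the first claim. The ``in particular'' clause follows from the analogous Coxeter fact that any two reduced expressions for the same element are related by length-preserving exchanges alone, whose lift uses only (1') and (2').

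The only real obstacle will be the case analysis establishing the reverse lifting of moves in step one; once that is in place, the rest of the argument is a direct transcription of standard Coxeter-group facts across the bijection.
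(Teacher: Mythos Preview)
Your proposal is correct and follows the same route the paper takes---transfer to $C(n,2)$ via the rewriting, apply the Coxeter gradient-descent algorithm, and pull back---the paper simply asserts that the corollary follows ``immediately'' from Theorem~\ref{thpr} without spelling out the details. You have been more scrupulous than the paper in isolating and verifying the reverse-lifting step (that each elementary Coxeter move on $\tilde w$ comes from the corresponding $G_{n}^{2}$ move on $w$ under the word-level bijection), a point the paper leaves implicit and only hints at afterwards when it constructs the inverse of the map $co$.
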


The inverse map to $co$ is constructed in a similar way: one should just take into account that if ${\tilde w}$ corresponds to $w$ then $l(w)=m({\tilde w})^{-1}$. This means that having the word ${\tilde w}$, we know all permutations $\sigma_{k}(w)$ and $l(w)$ for the potential word $w$ we are constructing.

The above arguments lead us to the following
\begin{thm}
The map $co:w\mapsto {\tilde w}$ is an isomorphism $PG_{n}^{k}\to 'C(n,2)$.
\end{thm}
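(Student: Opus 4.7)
The plan is to show $co$ is a well-defined homomorphism $PG_n^2 \to {}'C(n,2)$, build a word-level inverse $\rho$, and verify $\rho$ descends to a group-level map by checking the three Coxeter relations defining $C(n,2)$. That $co$ lands in ${}'C(n,2) := \ker(m)$ is immediate from the ``moreover'' clause of Theorem \ref{thpr}: if $l(w) = 1$ then $m(\tilde{w}) = l(w)^{-1} = 1$. The homomorphism property is where the purity hypothesis is essential: for pure $w_1, w_2$ the accumulated permutation $\sigma$ reaches the identity after processing $w_1$ (because $l(w_1) = 1$), so the rewriting of the concatenation $w_1 w_2$ is literally $\tilde{w}_1 \tilde{w}_2$. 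This reset property fails outside $PG_n^2$ and is precisely why the natural target is ${}'C(n,2)$ rather than all of $C(n,2)$.

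The inverse $\rho$ is constructed by running the rewriting procedure backwards. Given a word $\tilde{v} = b_{x_1 y_1} \cdots b_{x_k y_k}$ in the generators of $C(n,2)$, set $\tau_0 = \mathrm{id}$ and, inductively, take the $(p+1)$-st letter of $\rho(\tilde{v})$ to be $a_{\tau_p^{-1}(x_{p+1}), \tau_p^{-1}(y_{p+1})}$, updating $\tau_{p+1}$ by the same left-multiplication rule that governs the $\sigma_p$'s in the definition of $co$. At the word level the compositions $co \circ \rho$ and $\rho \circ co$ are identity maps by construction, and if $\tilde{v}$ represents an element of ${}'C(n,2)$ then the ``moreover'' clause applied in reverse forces $l(\rho(\tilde{v})) = 1$, so $\rho(\tilde{v}) \in PG_n^2$.

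The only real content is verifying that $\rho$ respects the three defining relations of $C(n,2)$. The involution $b_{ij}^2 = 1$ pulls back to a pair $a_{ab} a_{ab}$, killed by (3'); the disjoint commutation $b_{ij} b_{kl} = b_{kl} b_{ij}$ pulls back to another disjoint commutation (2'), because the $\tau$'s on $\{i,j\}$ and on $\{k,l\}$ evolve independently under the intervening transposition. The braid triple $b_{ab} b_{bc} b_{ab} = b_{bc} b_{ab} b_{bc}$ is the main obstacle and requires the same six-value tracking of $\tau_{p+\ell}(i), \tau_{p+\ell}(j), \tau_{p+\ell}(k)$ carried out in the proof of Theorem \ref{thpr}, but run in reverse: one checks that the two pulled-back triples are exactly $a_{ij} a_{ik} a_{jk}$ and $a_{jk} a_{ik} a_{ij}$, equated by (1'), and that the permutation $\tau_{p+3}$ emerging afterwards agrees on both sides so the rewriting continues consistently. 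Once these three checks are in place, $\rho$ is a well-defined homomorphism ${}'C(n,2) \to PG_n^2$ and, by the word-level inversion already observed, a two-sided inverse of $co$, completing the proof.
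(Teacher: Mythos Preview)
Your proof is correct and follows the same architecture as the paper's own argument: well-definedness via Theorem~\ref{thpr}, the homomorphism property from the purity reset $\sigma_{|w_1|}=\mathrm{id}$, and an explicit inverse rewriting. The paper is much terser on the inverse step, dispatching it with ``proved analogously''; your explicit relation-by-relation check of $\rho$ against (1), (2), (3) is exactly the content of that analogy spelled out.
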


\begin{proof}
Indeed, the fact that equivalent words are mapped to equivalent words is proved in Theorem \ref{thpr}. Taking into account that for words from $PG_{n}^{2}$ the permutation $l(w)$ is trivial, we see that for $w_{1},w_{2}\in PG_{n}^{2}$ the equality  ${\tilde {(w_{1}w_{2})}}={\tilde w_{1}}{\tilde w_{2}}$ takes place; hence the map $co$ is a homomorphism. The existence of the inverse homomorphism is proved analogously.
\end{proof}

The rewriting techniques can be formulated as follows. Consider the
semidirect products $G_{n}^{2}\leftthreetimes S_{n}$ and $C(n,2)\leftthreetimes S_{n}$
where the permutation group $S_{n}$ acts on the generators $a_{ij}$ (resp., $b_{ij}$) by permutations of indices.

\begin{thm}
There is an isomorphism between semidirect products $G_{n}^{2}\leftthreetimes S_{n}$ and $C(n,2)\leftthreetimes S_{n}$
which takes $(b_{ij},1)$ to $(a_{ij},(ij))$ for each transposition $(ij)$ and $(\sigma,1)\to (\sigma,1)$.

Hence, $G_{n}^{2}$ is a normal subgroup of $C(n,2)\leftthreetimes S_{n}$.
\label{Vinberg}
\end{thm}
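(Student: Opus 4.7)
The plan is to exhibit mutually inverse homomorphisms between the two semidirect products. I will define
$\Phi : C(n,2) \leftthreetimes S_n \to G_n^2 \leftthreetimes S_n$
by $b_{ij} \mapsto (a_{ij},(ij))$ and $\tau \mapsto \tau$ for $\tau \in S_n$, and tentatively $\Psi$ in the opposite direction by $a_{ij} \mapsto (b_{ij},(ij))$, $\tau \mapsto \tau$. The bulk of the work is verifying that each specification respects the defining relations of its source group.

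For $\Phi$, the involution relation (3) reduces to $(a_{ij},(ij))^2 = (a_{ij}\cdot(ij)(a_{ij}),\,1) = (a_{ij}^2,1) = (1,1)$, since $(ij)$ fixes the unordered pair $\{i,j\}$ and $a_{ij}^2 = 1$ by (3'). Far commutativity (2) follows by direct computation: for disjoint $\{i,j\},\{k,l\}$ the transpositions $(ij),(kl)$ commute and fix one another's $a$-letters, so the product reduces to $(a_{ij}a_{kl},(ij)(kl))$, and this equals its reverse by (2'). The semidirect-action identity $\tau b_{ij}\tau^{-1} = b_{\tau(i)\tau(j)}$ transports to $(\tau,1)(a_{ij},(ij))(\tau^{-1},1) = (a_{\tau(i)\tau(j)},(\tau(i)\tau(j)))$, which is tautological in the target.

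The one non-routine step --- and the main potential obstacle --- is the Coxeter braid relation $b_{ij}b_{ik}b_{ij} = b_{ik}b_{ij}b_{ik}$. Carrying the $S_n$-component through each multiplication gives
\begin{align*}
(a_{ij},(ij))(a_{ik},(ik))(a_{ij},(ij)) &= (a_{ij}a_{jk}a_{ik},\,(jk)),\\
(a_{ik},(ik))(a_{ij},(ij))(a_{ik},(ik)) &= (a_{ik}a_{jk}a_{ij},\,(jk)).
\end{align*}
The $S_n$-parts already coincide, and equality of the $G_n^2$-parts $a_{ij}a_{jk}a_{ik} = a_{ik}a_{jk}a_{ij}$ is exactly relation (1') applied to the triple $(j,i,k)$. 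Thus the Coxeter braid relation pulls back to the triangle relation of $G_n^2$, and $\Phi$ is well-defined.

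The verification for $\Psi$ is structurally identical: relations (3'), (2'), and (1') pull back to $b_{ij}^2 = 1$, to disjoint commutativity, and to the Coxeter braid relation by the same three-letter computation run in reverse. Finally I will check that $\Phi \circ \Psi$ and $\Psi \circ \Phi$ act as the identity on generators --- for instance $\Phi(\Psi(a_{ij})) = \Phi(b_{ij})\cdot(ij) = (a_{ij},(ij))(1,(ij)) = (a_{ij},1)$, and likewise on $b_{ij}$, with both maps fixing $S_n$-generators --- so $\Phi$ and $\Psi$ are mutually inverse. The normality claim is then immediate: $G_n^2 \times \{1\}$ is manifestly normal in $G_n^2 \leftthreetimes S_n$, and $\Phi$ carries this normal subgroup to a normal subgroup of $C(n,2) \leftthreetimes S_n$ isomorphic to $G_n^2$.
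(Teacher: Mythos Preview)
Your proof is correct. The paper does not give a separate proof of this theorem: it is stated as a reformulation of the rewriting construction $w\mapsto\tilde w$ developed in Theorems~\ref{thpr} and~2, with the sentence ``The rewriting techniques can be formulated as follows'' standing in for an argument. Your direct verification on generators and relations is therefore more self-contained than what the paper offers at this point. The one substantive computation you perform --- that under the twist by transpositions the Coxeter braid relation $b_{ij}b_{ik}b_{ij}=b_{ik}b_{ij}b_{ik}$ becomes the $G_n^2$ triangle relation $a_{ij}a_{jk}a_{ik}=a_{ik}a_{jk}a_{ij}$ --- is precisely the ``most interesting case'' handled in the paper's proof of Theorem~\ref{thpr}, so both approaches rest on the same calculation; you have simply carried it out inside the semidirect product rather than through the rewriting map $co$. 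What the paper's route buys is the gradient-descent corollary for $G_n^2$; what your route buys is a proof of the isomorphism that does not depend on first establishing Theorems~\ref{thpr} and~2.
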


\section{Acknowledgements}

I am grateful to L.A.Bokut' and E.B. Vinberg for useful discussions; I am especially grateful to
E.B.Vinberg who suggested the formulation of Theorem \ref{Vinberg}.

\end{document}